\title[Weighted boundedness of the median maximal function]{A Characterization of the boundedness of the median maximal function on weighted $L^{p}$ spaces}
\author[Martikainen]{Henri Martikainen}
\address{Department of Mathematics and Statistics, University of Helsinki, Gustaf H\"allstr\"omin katu 2b, FI-00014 Helsinki, Finland}
\email{henri.martikainen@helsinki.fi} 
\thanks{Research of HM is supported by  the Academy of Finland grant 130166}
\author[Orponen] {Tuomas Orponen}
\address{Department of Mathematics and Statistics, University of Helsinki, Gustaf H\"allstr\"omin katu 2b, FI-00014 Helsinki, Finland}
\email{tuomas.orponen@helsinki.fi} 
\thanks{Research of TO is supported by the Finnish Centre of Excellence in Analysis and Dynamics Research}
\subjclass[2010]{42B25 (Primary); 42B35 (Secondary)}
\keywords{Weighted norm inequalities, $A_p$ weights, maximal function, median}
\newcommand{\R}{\mathbb{R}}
\newcommand{\N}{\mathbb{N}}
\newcommand{\Z}{\mathbb{Z}}
\newcommand{\calD}{\mathcal{D}}
\newcommand{\calP}{\mathcal{P}}
\newcommand{\calM}{\mathcal{M}}
\theoremstyle{plain}
\newtheorem{thm}[equation]{Theorem}
\newtheorem{lemma}[equation]{Lemma}
\newtheorem{proposition}[equation]{Proposition}
\newtheorem{cor}[equation]{Corollary}
\theoremstyle{definition}
\theoremstyle{remark}
\newtheorem{remark}[equation]{Remark}
\numberwithin{equation}{section}
\begin{document}

\begin{abstract} We introduce and study the median maximal function $\calM f$, defined in the same manner as the classical Hardy-Littlewood maximal function, only replacing integral averages of $f$ by medians throughout the definition. This change has a qualitative impact on the mapping properties of the maximal operator: in contrast with the Hardy--Littlewood operator, which is not bounded on $L^{1}$, we prove that $\calM$ is bounded on $L^{p}(w)$ for all $0 < p < \infty$, if and only if $w \in A_{\infty}$. The characterization is purely qualitative and does not give the dependence on $[w]_{A_{\infty}}$.
However, the sharp bound $\|\calM\|_{L^{1}(w) \to L^{1}(w)} \lesssim [w]_{A_{1}}$ is established.
\end{abstract}

\maketitle

\section{Introduction}

Let $Q \subset \R^{n}$ be a cube. If $f \colon Q \to \R$ is a measurable function, any number $\alpha \in \R$ such that  
\begin{displaymath}
|Q \cap \{f < \alpha\}| \leq |Q|/2 \quad \text{and} \quad |Q \cap \{f > \alpha\}| \leq |Q|/2
\end{displaymath}
is called a \emph{median} of $f$ on $Q$. Here $|\cdot|$ is the Lebesgue measure.
One can check that the set of medians of $f$ on $Q$ forms a compact subinterval of $\R$. Hence the concept of the \emph{median with the largest absolute value},
denoted $m_{f}(Q)$, is well-defined. The notion of a median is a substitute for the average of the function on $Q$ and exists for any measurable function with no integrability assumptions (unlike the average).

As of late, medians have proved to be very useful in the weighted theory of singular integrals. This is mainly due to a formula discovered by Lerner \cite{Lerner1}.
For results and techniques related to this, see the papers by Lerner \cite{Lerner2} and Cruz--Uribe, Martell and P\'erez \cite{CMP}. However, our purpose in this note is to study
the \emph{median maximal function} $\calM$ on its own. This is defined by
\begin{displaymath}
\calM f(x) = \mathop{\sup_{Q \subset \R^n}}_{x \in Q} |m_{f}(Q)|,
\end{displaymath}
where $Q$ is a cube.

A weight $w$ is a non-negative measurable function. We recall the definitions of the Muckenhoupt weight classes $A_p$, $p \in [1, \infty]$.
One says that $w \in A_1$ if
\begin{displaymath}
[w]_{A_1} = \sup_Q \frac{w(Q)}{|Q|}\|w^{-1}\|_{L^{\infty}(Q)} < \infty.
\end{displaymath}
Here $w(Q) = \int_Q w$. Next, we have $w \in A_p$, $1 < p < \infty$, if
\begin{displaymath}
[w]_{A_p} = \sup_Q \frac{w(Q)}{|Q|} \Big(\frac{\sigma(Q)}{|Q|}\Big)^{p-1} < \infty,
\end{displaymath}
where $\sigma$ is the dual weight defined by $\sigma = w^{-1/(p-1)}$. Finally, $A_{\infty} = \bigcup_{p < \infty} A_p$. Alternatively, the class $A_{\infty}$ is characterized by the finiteness of either of the quantities
\begin{displaymath}
[w]_{A_{\infty}} = \sup_Q \frac{w(Q)}{|Q|} \exp \left(\frac{1}{|Q|} \int_{Q} \log w^{-1} \right)
\end{displaymath}
or
\begin{displaymath}
[w]_{A_{\infty}}' = \sup_Q \frac{1}{w(Q)} \int_Q M(w\chi_Q).
\end{displaymath}
Here $M$ is the Hardy--Littlewood maximal operator.
These distinctions are not too important for this note, as all our results regarding $A_{\infty}$ are purely qualitative. However, for possible further developments we mention that
$[w]_{A_{\infty}}' $ is the more attractive characteristic of the two, as $[w]_{A_{\infty}}'  \le C_n[w]_{A_{\infty}}$ and $[w]_{A_{\infty}}' $ can be exponentially smaller than $[w]_{A_{\infty}}$ (see the recent paper by
Hyt\"onen and P\'erez \cite{HP}).

Our main result is a characterization of the boundedness of $\calM$ in the weighted situation. Given a weight function $w \colon \R^{n} \to [0,\infty)$, we prove that $\calM$ is bounded on $L^{p}(w)$ for \emph{some} $0 < p < \infty$, if and only if it is bounded on $L^{p}(w)$ for \emph{all} $0 < p < \infty$, if and only if $w \in A_{\infty}$. This is in stark contrast with the mapping properties of the classical Hardy--Littlewood maximal operator $M$.
The characterization is qualitative in that the dependence on $[w]_{A_{\infty}}$ is not tracked (and the proof is such that it has no hope of giving sharp dependence). For results of similar nature, but involving maximal operators of different type, see
\cite{CN} and the references therein. On the quantitative side,
we establish the bound $\|\calM\|_{L^{1}(w) \to L^{1}(w)} \lesssim [w]_{A_{1}}$, which is best possible. We write $X \lesssim Y$ to mean
$X \le CY$ with some constant $C$. Sometimes we specify the dependence more carefully, that is, $X \lesssim_{\delta} Y$ means
$X \le C(\delta)Y$.

\subsection*{Acknowledgements}
We thank Tuomas Hyt\"onen, Pertti Mattila and Carlos P\'erez for comments and pointing to us some references.

\section{Auxiliary Maximal Functions} 

For technical purposes, we introduce the maximal operators $\calM^{\tau}$ for $0 < \tau < 1$. Given a cube $Q \subset \R^{n}$, define the number $m^{\tau}_{f}(Q) \in [0,\infty)$ by
\begin{displaymath} 
m^{\tau}_{f}(Q) = \sup \left\{\alpha \geq 0 : \frac{|Q \cap \{|f| \geq \alpha\}|}{|Q|} \geq \tau \right\}.
\end{displaymath}
A simple convergence argument shows that
\begin{displaymath}
\frac{|Q \cap \{|f| \geq m^{\tau}_{f}(Q)\}|}{|Q|} \geq \tau,
\end{displaymath} 
which means that the $\sup$ in the definition in $m_{f}^{\tau}(Q)$ is always attained. The maximal operator $\calM^{\tau}$ is now defined in the same manner as the median maximal operator:
\begin{displaymath}
\calM^{\tau}f(x) = \mathop{\sup_{Q \subset \R^n}}_{x \in Q} m^{\tau}_{f}(Q).
\end{displaymath}

The operators $\calM$ and $\calM^{1/2}$ are closely related:
\begin{proposition}\label{comparison} 
The inequality $|m_{f}(Q)| \leq m^{1/2}_{f}(Q)$ always holds, and so $\calM \leq \calM^{1/2}$.
\end{proposition}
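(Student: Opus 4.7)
The plan is a direct case analysis on the sign of the median $\alpha := m_f(Q)$. By the definition of the median, one has both $|Q \cap \{f < \alpha\}| \le |Q|/2$ and $|Q \cap \{f > \alpha\}| \le |Q|/2$, which translates to $|Q \cap \{f \ge \alpha\}| \ge |Q|/2$ and $|Q \cap \{f \le \alpha\}| \ge |Q|/2$. The goal is to show that $|\alpha|$ is one of the admissible values in the supremum defining $m_f^{1/2}(Q)$, i.e.\ that
\begin{displaymath}
\frac{|Q \cap \{|f| \ge |\alpha|\}|}{|Q|} \ge \frac{1}{2}.
\end{displaymath}

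First I would handle the case $\alpha \ge 0$. Then $\{f \ge \alpha\} \subseteq \{|f| \ge \alpha\} = \{|f| \ge |\alpha|\}$, so the inequality follows immediately from $|Q \cap \{f \ge \alpha\}| \ge |Q|/2$. In the case $\alpha < 0$, I would instead use that $\{f \le \alpha\} \subseteq \{|f| \ge -\alpha\} = \{|f| \ge |\alpha|\}$, and invoke the other half of the median inequality, $|Q \cap \{f \le \alpha\}| \ge |Q|/2$.

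In both cases $|\alpha|$ satisfies the defining condition of $m_f^{1/2}(Q)$, and hence $|\alpha| \le m_f^{1/2}(Q)$ by taking the supremum. The pointwise inequality $\calM f \le \calM^{1/2} f$ then follows by taking the supremum over cubes $Q$ containing a given point $x$. There is no genuine obstacle here; the only subtlety is remembering that the sign of $\alpha$ governs which half of the median condition one uses, and that the sup in the definition of $m_f^{1/2}$ is over nonnegative $\alpha$, which is exactly why the absolute value on the left-hand side is natural.
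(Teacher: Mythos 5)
Your proof is correct and is essentially identical to the paper's: both argue by cases on the sign of the median $\alpha$, using the inclusion $\{f \ge \alpha\} \subseteq \{|f| \ge |\alpha|\}$ when $\alpha \ge 0$ and $\{f \le \alpha\} \subseteq \{|f| \ge |\alpha|\}$ when $\alpha < 0$, together with the two halves of the median condition. No issues.
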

\begin{proof} Let $\alpha \in \R$ be any median of $f$ on a cube $Q \in \calD$. If $\alpha \geq 0$, then
\begin{displaymath}
|Q \cap \{|f| \geq |\alpha|\}| \geq |Q \cap \{f \geq \alpha\}| \geq |Q|/2,
\end{displaymath}
and if $\alpha < 0$, similarly
\begin{displaymath}
|Q \cap \{|f| \geq |\alpha|\}| \geq |Q \cap \{f \leq \alpha\}| \geq |Q|/2.
\end{displaymath}
\noindent This shows that $m^{1/2}_{f}(Q) \geq |\alpha|$, and thus $m^{1/2}_{f}(Q) \geq |m_{f}(Q)|$.
\end{proof}
\begin{remark}
For non-negative functions $f \colon Q \to \R$ we have $m_{f}(Q) = m^{1/2}_{f}(Q)$.
\end{remark}

\subsection*{Dyadic variants}

Let $\calD$ be a dyadic grid in $\R^{n}$. That is, $\calD$ is a collection of cubes in $\R^{n}$ with the property that $Q_{1} \cap Q_{2} \in \{\emptyset,Q_{1},Q_{2}\}$ for all $Q_{1},Q_{1} \in \calD$. The $\calD$-dyadic variants of the maximal operators $\calM$ and $\calM^{\tau}$ are defined by
\begin{displaymath}
\calM_{\calD}f(x) = \mathop{\sup_{Q \in \calD}}_{x \in Q} |m_{f}(Q)| \quad \text{and} \quad \calM_{\calD}^{\tau}f(x) = \mathop{\sup_{Q \in \calD}}_{x \in Q} m^{\tau}_{f}(Q). 
\end{displaymath} 

As usual, one can control the original operators $\calM$ and $\calM^{\tau}$ by the maximum of the dyadic operators associated with several distinct grids $\calD$. Indeed, it is well-known that one may construct $2^{n}$ dyadic grids $\calD_{1},\ldots,\calD_{2^{n}}$ such that for \emph{any} (possibly non-dyadic) cube $Q \subset \R^{n}$ there exists a cube $R \in \bigcup_{j = 1}^{2^{n}} \calD_{j}$ with $Q \subset R$ and $|R| \leq C_{n}|Q|$, where $C_{n} \leq 6^{n}$. We record the following easy proposition:
\begin{proposition}\label{dyadic}
Let $\calD_{1},\ldots,\calD_{2^{n}}$ be the dyadic grids defined above. Then
\begin{displaymath}
\calM^{\tau}f(x) \leq \max \{ \calM^{C_{n}^{-1}\tau}_{\calD_{j}}f(x) : 1 \leq j \leq 2^{n}\}.
\end{displaymath} 
\end{proposition}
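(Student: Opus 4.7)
The plan is to exploit the standard three-grid (or here, $2^n$-grid) trick directly at the level of the $\tau$-level sets. Fix $x \in \R^n$ and an arbitrary cube $Q \ni x$, and choose via the stated property of $\calD_1,\ldots,\calD_{2^n}$ some $R \in \calD_j$ with $Q \subset R$ and $|R| \leq C_n |Q|$. I would like to conclude that $m_f^\tau(Q) \leq m_f^{C_n^{-1}\tau}(R)$, which after taking suprema over $Q \ni x$ gives the proposition.

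To get this pointwise comparison, set $\alpha = m_f^\tau(Q)$. By the attainment of the $\sup$ in the definition of $m_f^\tau(Q)$ noted just after it, one has $|Q \cap \{|f| \geq \alpha\}| \geq \tau |Q|$. Since $Q \subset R$, monotonicity of Lebesgue measure gives
\begin{displaymath}
|R \cap \{|f| \geq \alpha\}| \geq |Q \cap \{|f| \geq \alpha\}| \geq \tau |Q| \geq \frac{\tau}{C_n} |R|,
\end{displaymath}
so $\alpha$ lies in the set whose supremum defines $m_f^{C_n^{-1}\tau}(R)$. Hence $m_f^{C_n^{-1}\tau}(R) \geq \alpha = m_f^\tau(Q)$.

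Finally, since $x \in Q \subset R$ and $R \in \calD_j$ for some $j \in \{1,\ldots,2^n\}$, the dyadic maximal operator satisfies $\calM_{\calD_j}^{C_n^{-1}\tau}f(x) \geq m_f^{C_n^{-1}\tau}(R) \geq m_f^\tau(Q)$. Taking the supremum over all cubes $Q \ni x$ on the right and the maximum over $j$ on the left yields the claim. There is no real obstacle here; the only thing to keep in mind is that the parameter $\tau$ is degraded by exactly the volume factor $C_n$ coming from the containment $Q \subset R$, which is why the dyadic operators on the right use the smaller level $C_n^{-1}\tau$.
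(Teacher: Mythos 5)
Your argument is correct and is exactly the standard one the authors have in mind (the paper records this proposition without proof as "easy"): compare an arbitrary cube $Q \ni x$ with its containing grid cube $R$, and use the attainment of the supremum in the definition of $m_{f}^{\tau}(Q)$ to transfer the level set estimate from $Q$ to $R$ at the degraded parameter $C_{n}^{-1}\tau$. Nothing is missing.
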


\section{Boundedness of $\calM$ and $\calM^{\tau}$ in weighted spaces}\label{twoWeight}

\subsection*{The $A_{\infty}$-Characterisation}

The following characterisation of $A_{\infty}$ is part (e) of \cite[Theorem 9.33]{Gr}:
\begin{lemma}\label{aInftyChar} A function $w \colon \R^{n} \to (0,\infty)$ is in $A_{\infty}$, if and only if there exist constants $\alpha,\beta \in (0,1)$ such that
\begin{displaymath}
|E| \geq \alpha|Q| \quad \Longrightarrow \quad w(E) \geq \beta w(Q)
\end{displaymath} 
for all cubes $Q \subset \R^{n}$ and all measurable subsets $E \subset Q$.
\end{lemma}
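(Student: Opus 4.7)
This classical characterization of $A_\infty$ is cited from \cite[Theorem 9.33]{Gr}. We outline the standard two-step argument.

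The forward direction is immediate from H\"older's inequality. Assuming $w \in A_p$ for some $p \in [1,\infty)$, the plan is to apply H\"older to the factorization $\chi_E = \chi_E w^{1/p} \cdot w^{-1/p}$, which yields
\begin{displaymath}
|E|^p \leq w(E) \Big(\int_Q w^{-1/(p-1)}\Big)^{p-1}.
\end{displaymath}
Dividing by $|Q|^p$ and invoking the $A_p$ bound gives $(|E|/|Q|)^p \leq [w]_{A_p}\, w(E)/w(Q)$, which is the sought implication with $\beta := \alpha^p / [w]_{A_p}$ (the case $p=1$ is even simpler and handled directly by the $A_1$ condition).

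The converse is the substantive direction. The plan is to bootstrap the $(\alpha,\beta)$-condition into a reverse H\"older inequality on every cube, from which membership in $A_p$ for some finite $p$ follows via the classical Coifman--Fefferman equivalences. Fix a cube $Q$ and write $w_Q = w(Q)/|Q|$. For $t := (1-\alpha)^{-1}$, perform a Calder\'on--Zygmund decomposition of $w$ on the dyadic subcubes of $Q$ at height $tw_Q$, producing disjoint maximal cubes $\{Q_j\} \subset Q$ with $\sum_j |Q_j| \leq w(Q)/(tw_Q) = (1-\alpha)|Q|$. The complement $Q \setminus \bigcup_j Q_j$ thus occupies at least an $\alpha$-fraction of $Q$, and the hypothesis forces $w(\bigcup_j Q_j) \leq (1-\beta) w(Q)$. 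Iterating inside each $Q_j$ at the height $tw_{Q_j}$ and chaining the estimates produces geometric level-set decay for the dyadic maximal function of $w$ on $Q$, and layer-cake integration of this decay against $w^\delta$ (for $\delta > 0$ small enough) yields the reverse H\"older inequality on $Q$.

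The main obstacle is the iteration step: one must verify that successive Calder\'on--Zygmund selections produce cubes on which the $(\alpha,\beta)$-condition can be reapplied with the same constants, so that the factor $(1-\beta)$ compounds geometrically rather than degrading at each stage. The final passage from a dyadic reverse H\"older inequality to the full $A_p$ property on arbitrary cubes is then handled by a standard averaging argument over shifted dyadic grids, much in the spirit of Proposition~\ref{dyadic}.
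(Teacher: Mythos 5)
The paper does not prove this lemma at all: it is quoted verbatim as part (e) of Theorem 9.33 in \cite{Gr}, so there is no in-paper argument to compare against. Your sketch is essentially the standard textbook proof behind that citation, and both halves are sound. The forward direction via H\"older applied to $\chi_E = \chi_E w^{1/p}\cdot w^{-1/p}$ is correct and gives $\beta = \alpha^p/[w]_{A_p}$, and the Calder\'on--Zygmund selection at height $t w_Q$ with $t=(1-\alpha)^{-1}$ does yield $\sum_j|Q_j| < |Q|/t = (1-\alpha)|Q|$, hence $w(\bigcup_j Q_j)\le(1-\beta)w(Q)$, and the iteration compounds geometrically with the same constants because the hypothesis is uniform over all cubes; the layer-cake integration then gives the reverse H\"older inequality.

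Two small points. First, your closing sentence about averaging over shifted dyadic grids is unnecessary: the Calder\'on--Zygmund decomposition is performed on the dyadic \emph{descendants} of an arbitrary fixed cube $Q$, so the reverse H\"older inequality you obtain already holds for every cube with uniform constants --- no grid-translation argument is needed. Second, the step you delegate to ``the classical Coifman--Fefferman equivalences,'' namely that a reverse H\"older inequality implies $w\in A_p$ for some finite $p$, is itself a nontrivial arc of that circle of equivalences: the reverse H\"older inequality directly yields only the quantitative form of the $(\alpha,\beta)$-condition you started from, and to reach $A_p$ one must rerun the whole argument with the roles of $dx$ and $w\,dx$ interchanged (obtaining a reverse H\"older inequality for $w^{-1}$ with respect to $w\,dx$, which rearranges into the $A_p$ condition). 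For a cited classical lemma this is an acceptable level of detail, but it is the one place where your outline leans on machinery rather than supplying it.
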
 

\begin{thm} Let $\tau \in (0,1)$ and $p \in (0,\infty)$. Then the operator $\calM^{\tau}$ is bounded on $L^{p}(w)$, if and only if $w \in A_{\infty}$.
\end{thm}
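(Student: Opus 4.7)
The plan is to prove the two implications separately; neither direction requires the dyadic reduction of Proposition \ref{dyadic}.

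For sufficiency, I would start from the pointwise comparison
\[
\calM^\tau f(x) \leq \tau^{-1/s} \bigl(M(|f|^s)(x)\bigr)^{1/s}, \qquad s > 0,
\]
where $M$ is the Hardy--Littlewood maximal operator. This is the key trick. To verify it, fix $Q \ni x$ and $\alpha \leq m^\tau_f(Q)$; then $|Q \cap \{|f| \geq \alpha\}| \geq \tau|Q|$, so $\frac{1}{|Q|}\int_Q |f|^s \geq \tau \alpha^s$, which rearranges to the claim. Given $w \in A_\infty = \bigcup_r A_r$, I pick $q$ with $w \in A_q$ and then choose $s \in (0, p/q)$. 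Since $A_q \subset A_{p/s}$, the classical Muckenhoupt theorem yields boundedness of $M$ on $L^{p/s}(w)$, and therefore
\[
\|\calM^\tau f\|_{L^p(w)} \leq \tau^{-1/s} \|M(|f|^s)\|_{L^{p/s}(w)}^{1/s} \lesssim \||f|^s\|_{L^{p/s}(w)}^{1/s} = \|f\|_{L^p(w)}.
\]

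For necessity, I would feed indicator functions into the hypothesis and invoke Lemma \ref{aInftyChar}. Fix a cube $Q$ and a measurable $E \subset Q$ with $|E| > \tau|Q|$, and set $f = \chi_E$. Since $\{|f| \geq \alpha\} = E$ for $\alpha \in (0,1]$ and is empty for $\alpha > 1$, one computes $m^\tau_f(Q) = 1$, hence $\calM^\tau f \geq 1$ everywhere on $Q$. Chebyshev together with the $L^p(w)$-boundedness of norm $C$ then gives
\[
w(Q) \leq w(\{\calM^\tau f \geq 1\}) \leq \|\calM^\tau f\|_{L^p(w)}^p \leq C^p w(E).
\]
Taking any $\alpha_0 \in (\tau, 1)$ and $\beta_0 = \min(C^{-p}, 1/2)$, the implication $|E| \geq \alpha_0|Q| \Rightarrow w(E) \geq \beta_0 w(Q)$ exhibits the $A_\infty$ criterion.

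The only substantive step is the pointwise bound $\calM^\tau f \leq \tau^{-1/s}(M(|f|^s))^{1/s}$; once it is in hand, sufficiency reduces to the classical weighted boundedness of $M$ after tuning $s$ small enough to exploit $A_\infty = \bigcup_r A_r$, and necessity is a direct testing argument. I do not anticipate any serious obstacles beyond spotting this pointwise comparison.
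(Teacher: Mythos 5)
Your proof is correct, but the sufficiency direction follows a genuinely different route from the paper's. The necessity argument (testing on $\chi_{E}$ and invoking Lemma \ref{aInftyChar}) is exactly what the paper does. For sufficiency, however, the paper never passes through the Hardy--Littlewood operator: it works directly with the geometric characterization of $A_{\infty}$, proving via a covering lemma and an iteration argument that the density threshold $\alpha$ in \eqref{form0} can be lowered to any $\gamma \in (0,1)$, and then establishing the level-set comparison $w(\{\calM^{\eta}_{\calD}f > \lambda\}) \lesssim_{w} w(\{|f| > \lambda\})$ for maximal dyadic cubes, which yields all $L^{p}(w)$ bounds at once from the layer-cake formula. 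Your route instead rests on the pointwise domination $\calM^{\tau}f \leq \tau^{-1/s}\bigl(M(|f|^{s})\bigr)^{1/s}$ -- which is correctly verified by Chebyshev on the set $Q \cap \{|f| \geq \alpha\}$ -- followed by choosing $s < p/q$ so that $w \in A_{q} \subset A_{p/s}$ with $p/s > 1$ and applying Muckenhoupt's theorem; this is the standard device for the geometric maximal operator (compare \cite{CN}) and it handles every $p \in (0,\infty)$ since $s$ can be taken arbitrarily small. Your argument is shorter and outsources the hard analysis to the classical weighted theory of $M$, while the paper's argument is self-contained (modulo Lemma \ref{aInftyChar}), gives the stronger measure-of-level-sets inequality, and avoids any appeal to the $L^{r}(w)$-boundedness of $M$; neither approach tracks the dependence on $[w]_{A_{\infty}}$. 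Two small points to tidy up: in the necessity step the natural choice is simply $\alpha_{0} = \tau$ (the computation already works for $|E| \geq \tau|Q|$, and $\tau \in (0,1)$ as required by the lemma), and in the pointwise bound one should note that $\alpha \mapsto |Q \cap \{|f| \geq \alpha\}|$ is non-increasing, so the defining inequality indeed holds for every $\alpha \leq m^{\tau}_{f}(Q)$, the supremum being attained as observed in Section 2.
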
 

\begin{proof} Assume first that $\|\calM^{\tau}\|_{L^{p}(w) \to L^{p}(w)} < \infty$. If $Q \subset \R^{n}$ is any cube, and $E \subset Q$ is a subset with $|E| \geq \tau|Q|$, we have $(\calM^{\tau}\chi_{E})|Q \equiv 1$. Hence
\begin{displaymath}
w(Q)^{1/p} \leq \left(\int (\calM^{\tau}\chi_{E})^{p} \, dw\right)^{1/p} \leq \|\calM^{\tau}\|_{L^{p}(w) \to L^{p}(w)}w(E)^{1/p}.
\end{displaymath} 
This shows that the condition in Lemma \ref{aInftyChar} is in force for $w$ with $\alpha = \tau$ and $\beta = \|\calM\|_{L^{p}(w) \to L^{p}(w)}^{-p}$. Thus $w \in A_{\infty}$.

To prove the converse, let $w \in A_{\infty}$, and apply Lemma \ref{aInftyChar} to locate $\alpha,\beta \in (0,1)$ such that
\begin{equation}\label{form0}
|E| \geq \alpha|Q| \quad \Longrightarrow \quad w(E) \geq \beta w(Q)
\end{equation} 
for all cubes $Q \subset \R^{n}$ and all measurable subsets $E \subset Q$. Given $\eta \in (0,1)$ and a measurable subset $E \subset \R^{n}$, we will, for the rest of the proof, write $\calM^{\eta}(E)$ for the set satisfying $\calM^{\eta}(\chi_{E}) = \chi_{\calM^{\eta}(E)}$. More precisely,
\begin{displaymath}
\calM^{\eta}(E) = \{x \in \R^n:\, |E \cap Q| \ge \eta|Q|\textrm{ for some cube } Q \textrm{ containing } x\}.
\end{displaymath}

\begin{lemma}\label{cover} Let $\alpha \in (0,1)$ be the constant in (\ref{form0}). Then
\begin{displaymath} w(\calM^{\alpha}(E)) \lesssim_{w} w(E) \end{displaymath} 

\noindent for all bounded measurable sets $E \subset \R^{n}$. The set $\calM^{\alpha}(E)$ is also bounded and measurable.
\end{lemma}

\begin{proof} For every $x \in \calM^{\alpha}(E)$ we may find a cube $Q \subset \R^{n}$ such that $x \in Q$ and $|Q \cap E| \geq \alpha |Q|$. Use the basic covering lemma to find a disjoint collection $Q_{1},Q_{1},\ldots$ among these cubes such that $\calM^{\alpha}(E) \subset \bigcup_{j} 5Q_{j}$. The measure $w$ is doubling (see \cite[Proposition 9.3.2(6)]{Gr}), whence
\begin{displaymath}
w(\calM^{\alpha}(E)) \lesssim_{w} \sum_{j} w(Q_{j}) \leq \beta^{-1}\sum_{j} w(Q_{j} \cap E) \leq \beta^{-1}w(E),
\end{displaymath} 
using (\ref{form0}) in the penultimate inequality. The boundedness and measurability of $\calM^{\alpha}(E)$ are clear.
\end{proof}

\begin{lemma} Let $\eta \in (0,1)$, let $Q \subset \R^{n}$ be a cube, and let $E \subset Q$ be a measurable subset with $|E| \leq \eta |Q|$. Then 
\begin{displaymath}
|\calM^{\eta}(E) \cap Q| \geq \left(1 + \frac{\eta^{-1} - 1}{2^{n}}\right)|E|.
\end{displaymath}
\end{lemma}

\begin{proof} Let $\calD = \calD(Q)$, where $\calD(Q)$ is the collection of dyadic subcubes of $Q$, obtained by repeatedly dividing into $2^n$ equal cubes.
We may assume that $0 < |E| < \eta|Q|$. If $|E| = 0$, the claim is vacuous, and if $|E| = \eta|Q|$, then $Q \subset \calM^{\eta}(E)$, whence the claim follows. Let $\calP$ consist of the maximal cubes in $\calD$ such that $|E \cap P| \geq \eta |P|$ for every $P \in \calP$. By the Lebesgue differentiation theorem, such cubes exist and almost cover $E$, ie. $\sum_{P \in \calP} |P \cap E| = |E|$. Note that $Q \notin \calP$ by assumption. Let $\hat{P}$ denote the parent cube of $P$, and let $\hat{\calP} \subset \calD$ consist of the maximal cubes in the collection $\{\hat{P} : P \in \calP\}$. We will now demonstrate that
\begin{displaymath}
|\calM^{\eta}(E) \cap R| \geq \left(1 + \frac{\eta^{-1} - 1}{2^{n}}\right)|R \cap E|
\end{displaymath}
for every $R \in \hat{\calP}$. Summing over $R \in \hat{\calP}$ will then prove the lemma.

Fix $R \in \hat{\calP}$. Then $R = \hat{P}$ for some $P \in \calP$, which means that $|E \cap P| \geq \eta|P|$, but $|E \cap R| < \eta|R|$. Let $\lambda P$, $\lambda \in [1,2]$, denote the cube with the same common corner as $P$ and $R$ and side-length $\ell(\lambda P) = \lambda \ell(P)$. Then the function $\lambda \mapsto f(\lambda) := |E \cap \lambda P|/|\lambda P|$ is continuous, and $f(2) < \eta \leq f(1)$. Hence there exists $\lambda \in [1,2)$ such that $\tilde{P} = \lambda P$ satisfies $|E \cap \tilde{P}| = \eta|\tilde{P}|$. Then $\tilde{P} \subset M^{\eta}(E) \cap R$, whence 
\begin{align*}
|\calM^{\eta}(E) \cap R| & \geq |E \cap (R \setminus \tilde{P})| + |\tilde{P}|\\
& = |E \cap (R \setminus \tilde{P})| + \eta^{-1}|E \cap \tilde{P}|\\
& = |E \cap R| + (\eta^{-1} - 1)|E \cap \tilde{P}|\\
& \geq |E \cap R| + (\eta^{-1} - 1)\eta |P|\\
& = |E \cap R| + \frac{\eta^{-1} - 1}{2^{n}}\eta |R| \geq \left(1 + \frac{\eta^{-1} - 1}{2^{n}}\right)|E \cap R|.
\end{align*}
\end{proof}

\begin{lemma} Let $\gamma \in (0,1)$, and let $w$ be the $A_{\infty}$-weight satisfying (\ref{form0}). Then
\begin{displaymath}
|E| \geq \gamma|Q| \quad \Longrightarrow \quad w(E) \gtrsim_{\gamma,n,w} w(Q)
\end{displaymath} 
for all cubes $Q \subset \R^{n}$ and all measurable subsets $E \subset Q$. In other words, $\alpha$ can be essentially replaced by $\gamma$ in (\ref{form0}).
\end{lemma}

\begin{proof} We may assume that $\gamma \leq \alpha$. Fix a cube $Q \subset \R^{n}$ and a measurable subset $E \subset Q$ with $|E| \geq \gamma|Q|$. Let $(\calM^{\alpha})^{k}$ denote the $k^{th}$ iterate of the maximal operator $\calM^{\alpha}$. The previous lemma applied with $\eta = \alpha$ shows that either $|E| \geq \alpha|Q|$ (in which case everything is clear), or then
\begin{displaymath}
|(\calM^{\alpha})^{k}(E) \cap Q| \geq \alpha|Q|
\end{displaymath} 
for some large enough $k \in \N$ depending only on $\alpha,\gamma$ and $n$. Thus
\begin{displaymath}
w((\calM^{\alpha})^{k}(E) \cap Q) \geq \beta w(Q)
\end{displaymath} 
according to (\ref{form0}). Lemma \ref{cover} then yields
\begin{displaymath}
w(Q) \leq \beta^{-1}w((\calM^{\alpha})^{k}(E) \cap Q) \leq w((\calM^{\alpha})^{k}(E)) \lesssim_{\gamma,n,w} w(E),
\end{displaymath} 
as required.
\end{proof}

Now we are prepared to prove that $\calM^{\tau}$ is bounded on $L^{p}(w)$. Recall the discussion above Proposition \ref{dyadic}, and, in particular, the dyadic grids $\calD_{1},\ldots,\calD_{2^{n}}$ and the constant $C_{n}$. Applying the lemma above with $\gamma = C_{n}^{-1}\tau$ yields
\begin{equation}\label{comparison2}
|E| \geq C_{n}^{-1}\tau w(Q) \quad \Longrightarrow \quad w(E) \gtrsim_{n,w,\tau} w(Q)
\end{equation} 
for all cubes $Q \subset \R^{n}$ and all measurable subsets $E \subset Q$. Fix $1 \leq i \leq 2^{n}$ and write $\calD = \calD_{i}$. Given $f \in L^{p}(w)$ and $\lambda > 0$, the set $\{M_{\calD}^{C_{n}^{-1}\tau} f > \lambda\}$ is expressible as the disjoint union of the cubes $Q_{1},Q_{2},\ldots \in \calD$ maximal with respect to the property that $m_{f}^{C_{n}^{-1}\tau}(Q_{j}) > \lambda$. In particular,
\begin{displaymath}
|Q_{j} \cap \{|f| > \lambda\}| \geq |Q_{j} \cap \{|f| \geq m_{f}^{C_{n}^{-1}\tau}(Q_{j})\}| \geq C_{n}^{-1}\tau|Q_{j}|
\end{displaymath}
for $j \in \N$. Thus $E_{j} := Q_{j} \cap \{|f| > \lambda\}$ is subset of $Q_{j}$ to which (\ref{comparison2}) applies: 
\begin{displaymath}
w(\{M^{C_{n}^{-1}\tau}_{\calD}f > \lambda\}) = \sum_{j \in \N} w(Q_{j}) \lesssim_{n,w,\tau} \sum_{j \in \N} w(E_{j}) \leq w(\{|f| > \lambda\}).
\end{displaymath}
Thus
\begin{align*}
\|M_{\calD}^{C_{n}^{-1}\tau}f\|_{L^{p}(w)}^{p} & = \int_{0}^{\infty} p\lambda^{p - 1}w(\{M^{C_{n}^{-1}\tau}_{\calD}f > \lambda\}) \, d\lambda\\
& \lesssim_{n,w,\tau} \int_{0}^{\infty} p\lambda^{p - 1}w(\{|f| > \lambda\}) \, d\lambda = \|f\|_{L^{p}(w)}^{p},
\end{align*}
which proves that $\|M_{\calD}^{C_{n}^{-1}\tau}\|_{L^{p}(w) \to L^{p}(w)} < \infty$ for every dyadic grid $\calD = \calD_{i}$. Proposition \ref{dyadic} then implies that $\|M^{\tau}\|_{L^{p}(w) \to L^{p}(w)} < \infty$.
\end{proof}

We are ready to prove our main theorem.
\begin{thm} The median maximal operator $\calM$ is bounded on $L^{p}(w)$, $p \in (0,\infty)$, if and only if $w \in A_{\infty}$.
\end{thm}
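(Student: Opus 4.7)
The plan is to deduce this theorem almost immediately from the preceding results, specifically Proposition~\ref{comparison} and the theorem characterizing boundedness of $\calM^{\tau}$.

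For the sufficiency direction ($w \in A_{\infty} \Rightarrow \calM$ bounded on $L^{p}(w)$), I would invoke Proposition~\ref{comparison}, which gives the pointwise bound $\calM f \leq \calM^{1/2} f$. The previous theorem applied with $\tau = 1/2$ says that $\calM^{1/2}$ is bounded on $L^{p}(w)$ whenever $w \in A_{\infty}$, so $\calM$ inherits this boundedness. No new work is required here.

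For the necessity direction, I would mimic the first half of the proof of the previous theorem, replacing $\calM^{\tau}\chi_E$ with $\calM \chi_E$. The key observation is the remark following Proposition~\ref{comparison}: since $\chi_E \geq 0$, one has $m_{\chi_E}(Q) = m^{1/2}_{\chi_E}(Q)$. Concretely, whenever $E \subset Q$ satisfies $|E| \geq |Q|/2$, the value $\alpha = 1$ is a median of $\chi_E$ on $Q$ (because $|Q \cap \{\chi_E < 1\}| = |Q \setminus E| \leq |Q|/2$ and $|Q \cap \{\chi_E > 1\}| = 0$), and thus $|m_{\chi_E}(Q)| = 1$. This forces $\calM \chi_E \equiv 1$ on $Q$, and so
\begin{displaymath}
w(Q)^{1/p} \leq \left(\int (\calM \chi_E)^{p}\,dw\right)^{1/p} \leq \|\calM\|_{L^{p}(w) \to L^{p}(w)}\, w(E)^{1/p}.
\end{displaymath}
This verifies the hypothesis of Lemma~\ref{aInftyChar} with $\alpha = 1/2$ and $\beta = \|\calM\|_{L^{p}(w) \to L^{p}(w)}^{-p}$, yielding $w \in A_{\infty}$.

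There is no real obstacle here: the heavy lifting has already been done in the proof for $\calM^{\tau}$, and the bridge $\calM \leq \calM^{1/2}$, together with the equality $m = m^{1/2}$ on non-negative test functions, lets us transfer both directions of the characterization to $\calM$ without additional machinery.
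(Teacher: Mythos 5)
Your proof is correct and follows essentially the same route as the paper: the sufficiency direction is exactly the combination of $\calM \leq \calM^{1/2}$ with the $\calM^{\tau}$ theorem, and the necessity direction is the same testing argument on $\chi_E$ that the paper invokes by reference to the start of the previous proof. Your added verification that $1$ is a median of $\chi_E$ on $Q$ when $|E| \geq |Q|/2$ just makes explicit a detail the paper leaves to the reader.
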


\begin{proof} If $\calM$ is bounded on $L^{p}(w)$, the $A_{\infty}$-assertion then follows in the same manner as at the beginning of the proof of the previous theorem. Conversely, if $w \in A_{\infty}$, we may combine the previous theorem with the inequality $\calM \leq \calM^{1/2}$ to conclude that $\calM$ is bounded on $L^{p}(w)$.

\end{proof}
Fujii \cite{Fujii} proved that if $f \colon \R^n \to \R$ is a measurable function, then there holds $f(x) = \lim_{r \to 0} m_f(Q(x,r))$ for almost every $x$, where $Q(x,r)$ is the cube with center $x$ and side-length $2r$ (in fact, one may replace the cubes $Q(x,r)$ by any cubes converging to $x$). Combining this with the above theorem and dominated convergence yields the following corollary.
\begin{cor}
There holds
\begin{displaymath}
\lim_{r \to 0} \|f - m_f(Q(\cdot,r))\|_{L^p(w)} = 0
\end{displaymath}
for every $w \in A_{\infty}$ and $f \in L^p(w)$.
\end{cor}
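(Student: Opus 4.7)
The plan is to deduce the corollary from Fujii's almost-everywhere convergence result together with the dominated convergence theorem, where the dominating integrand comes from the boundedness of $\calM$ on $L^p(w)$ established in the main theorem.

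First, for almost every $x \in \R^n$, Fujii's theorem gives $m_f(Q(x,r)) \to f(x)$ as $r \to 0$, and therefore
\begin{displaymath}
|f(x) - m_f(Q(x,r))|^p \longrightarrow 0 \quad \text{for a.e. } x.
\end{displaymath}
So pointwise a.e.\ convergence of the integrand is immediate; the task is to find an $L^1(w)$-dominant that is independent of $r$.

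For the dominant, I would use the trivial bound $|m_f(Q(x,r))| \le \calM f(x)$, which is valid whenever $x \in Q(x,r)$. Combined with the standard inequality $|a + b|^p \le C_p(|a|^p + |b|^p)$ (with $C_p = 1$ for $0 < p \le 1$ and $C_p = 2^{p-1}$ for $p > 1$), this gives the pointwise estimate
\begin{displaymath}
|f(x) - m_f(Q(x,r))|^p \le C_p\bigl(|f(x)|^p + (\calM f(x))^p\bigr).
\end{displaymath}
Since $w \in A_\infty$, the main theorem asserts that $\calM$ is bounded on $L^p(w)$, so $\calM f \in L^p(w)$, i.e.\ $(\calM f)^p \in L^1(w)$. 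Together with $|f|^p \in L^1(w)$ by hypothesis, this produces an $r$-independent majorant in $L^1(w)$.

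Finally, the dominated convergence theorem applied with the measure $w\,dx$ yields
\begin{displaymath}
\lim_{r \to 0} \int |f(x) - m_f(Q(x,r))|^p \, dw(x) = 0,
\end{displaymath}
which is exactly the claim. I do not anticipate any real obstacle; the only mildly technical point is the joint measurability of $(x,r) \mapsto m_f(Q(x,r))$, but this is standard and is already implicit in the well-definedness of $\calM f$ as a measurable function.
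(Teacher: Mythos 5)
Your argument is correct and is precisely the one the paper intends: Fujii's a.e.\ convergence plus the majorant $C_p(|f|^p + (\calM f)^p)$, which lies in $L^1(w)$ by the main theorem, followed by dominated convergence with respect to $w\,dx$. The paper only sketches this in one line, so your write-up is a faithful (and slightly more detailed) version of the same proof.
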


\subsection*{Linear bound for the $A_1$-characteristic}
We prove below that the dependence on $[w]_{A_1}$ is linear and indicate that this is sharp. Let us consider only the dyadic case, as is clearly sufficient.
\begin{proposition}
Let $\calD$ be a dyadic grid. There holds
\begin{displaymath}
\|\calM^{\tau}_{\mathcal{D}}f\|_{L^1(w)} \le 4\tau^{-1}[w]_{A_1}\|f\|_{L^1(w)},
\end{displaymath}
and the linear dependence on $[w]_{A_1}$ is best possible.
\end{proposition}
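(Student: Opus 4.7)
The plan is to derive the stronger pointwise-in-$\lambda$ distributional inequality
\[
w(\{\calM_{\calD}^{\tau}f > \lambda\}) \le \tau^{-1}[w]_{A_1}\, w(\{|f|>\lambda\}) \qquad (\lambda>0)
\]
and then integrate via layer cake. This is precisely the place where the median behaves qualitatively better than integral averages: the definition of $m_f^{\tau}$ directly encodes a distributional estimate on $f$, so no weak-to-strong detour is needed.

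Concretely, I would expand
\[
\|\calM_{\calD}^{\tau}f\|_{L^1(w)} = \int_0^\infty w(\{\calM_{\calD}^{\tau}f > \lambda\})\,d\lambda,
\]
and for each fixed $\lambda$ decompose the level set into the pairwise disjoint maximal cubes $Q_j \in \calD$ with $m_f^{\tau}(Q_j) > \lambda$. Since the supremum in $m_f^{\tau}$ is attained,
\[
|Q_j \cap \{|f|>\lambda\}| \ge |Q_j \cap \{|f|\ge m_f^{\tau}(Q_j)\}| \ge \tau|Q_j|.
\]
Writing $s_j := \|w^{-1}\|_{L^\infty(Q_j)}^{-1}$, the $A_1$ inequality gives $w(Q_j) \le [w]_{A_1}\,s_j\,|Q_j|$, while trivially $w(Q_j \cap \{|f|>\lambda\}) \ge s_j\,|Q_j \cap \{|f|>\lambda\}|$. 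Combining these,
\[
w(Q_j) \le \tau^{-1}[w]_{A_1}\,w(Q_j \cap \{|f|>\lambda\}).
\]
Summing over $j$ (the $Q_j$, and hence the sets $Q_j \cap \{|f|>\lambda\}$, are disjoint) gives the displayed distributional inequality, and integrating in $\lambda$ finishes the upper bound; the factor $4$ in the stated constant is harmless slack.

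For sharpness I would invoke the classical Buckley-type example: on $\R^n$ take a power weight of the form $w(x) = |x|^{n(\epsilon-1)}$ (shifted off the origin if convenient for the dyadic grid), whose $A_1$ characteristic satisfies $[w]_{A_1} \sim \epsilon^{-1}$, paired with a test function $f$ concentrated near the origin so that its dyadic median maximal function remains large on a long chain of ancestors supporting most of the $w$-mass. Matching orders in $\epsilon$ then gives $\|\calM_{\calD}^{\tau}f\|_{L^1(w)} \gtrsim [w]_{A_1} \|f\|_{L^1(w)}$. The main obstacle, such as it is, lies entirely on the sharpness side—tuning $f$ and the grid so that the median actually records the full power-law behaviour near $0$—since the upper bound is a short and entirely standard layer-cake argument once the attainment of the median is used.
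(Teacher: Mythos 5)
Your upper bound is correct and in fact slightly cleaner than the paper's argument: the paper discretizes the layer--cake integral over levels $2^k$, $k \in \Z$ (which is where its factor $4$ comes from), whereas your continuous-in-$\lambda$ version of the same maximal-cube decomposition yields the distributional inequality $w(\{\calM^{\tau}_{\calD}f > \lambda\}) \le \tau^{-1}[w]_{A_1}\,w(\{|f|>\lambda\})$ and hence the constant $\tau^{-1}[w]_{A_1}$ outright. The core steps --- maximal dyadic cubes $Q_j$ with $m_f^{\tau}(Q_j) > \lambda$, the density bound $|Q_j \cap \{|f|>\lambda\}| \ge \tau|Q_j|$, and the $A_1$ inequality $w(Q_j) \le [w]_{A_1}\operatorname{ess\,inf}_{Q_j} w\cdot|Q_j|$ --- are identical to the paper's.

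The sharpness half, however, has a genuine gap: the mechanism you describe does not exist for the median maximal operator. If $f$ is supported in a set of measure $m$, then $m_f^{\tau}(Q) = 0$ for every cube with $|Q| > \tau^{-1}m$, so $\calM^{\tau}_{\calD}f$ cannot ``remain large on a long chain of ancestors''; the level sets of $\calM^{\tau}f$ are contained in $\{M\chi_{\{|f|>\lambda\}} \ge c\tau\}$, a set of measure at most $C\tau^{-1}|\{|f|>\lambda\}|$. This is exactly the qualitative difference from the Hardy--Littlewood operator that the paper is about, and it is why the Buckley-type computation (which exploits the fat power-law tail of $M\chi_{B(0,\delta)}$) does not transfer. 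Any extremal example must instead make a \emph{single bounded dilation} of $\spt f$ capture $w$-mass larger by a factor $\sim [w]_{A_1}$; the paper does this with the two-valued weight $w_t = t\chi_{[-1,1]} + \chi_{\R\setminus[-1,1]}$, $[w_t]_{A_1} = t^{-1}$, and $f = \chi_{[-1,1]}$, for which $\calM^{1/2}f = \chi_{[-3,3]}$ and the norm ratio is $(4+2t)/(2t) \ge 2[w_t]_{A_1}$. Your power weight $|x|^{n(\epsilon-1)}$ could be salvaged, but only by supporting $f$ on an annulus \emph{adjacent to} (not containing) the origin, so that the bounded dilation swallows the origin where the $w$-mass concentrates; as written, a function ``concentrated near the origin'' already carries most of that mass itself, and no gain of order $[w]_{A_1}$ results.
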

\begin{proof}
For every $k \in \Z$ let $Q^k_1, Q^k_2, \ldots$ denote the maximal dyadic cubes for which $m_f^{\tau}(Q^k_j) > 2^k$. There holds
$|Q^k_j| \le \tau^{-1}|Q^k_j \cap \{|f| > 2^k\}|$. We may now estimate
\begin{align*}
\|\calM^{\tau}_{\mathcal{D}}f\|_{L^1(w)} &\le 2\sum_{k \in \Z} 2^kw(\{\calM^{\tau}_{\mathcal{D}}f > 2^k\}) \\
&= 2\sum_{k \in \Z} 2^k\sum_{j=1}^{\infty} w(Q^k_j) \\
&= 2\sum_{k \in \Z} 2^k\sum_{j=1}^{\infty} \frac{w(Q^k_j)}{|Q^k_j|}|Q^k_j| \\
&\le 2\sum_{k \in \Z} 2^k\sum_{j=1}^{\infty} [w]_{A_1}\inf_{Q^k_j} w \cdot \tau^{-1} |Q^k_j \cap \{|f| > 2^k\}| \\
&\le 2\tau^{-1}[w]_{A_1} \sum_{k \in \Z} 2^k\sum_{j=1}^{\infty} w(Q^k_j \cap \{|f| > 2^k\}) \\
&\le 2\tau^{-1}[w]_{A_1} \sum_{k \in \Z} 2^k w(\{|f| > 2^k\}) \\
&\le 4\tau^{-1}[w]_{A_1}\|f\|_{L^1(w)}.
\end{align*}

Let us now demonstrate the sharpness of this. Let $w_t = t\chi_{[-1,1]} + \chi_{\R \setminus [-1,1]}$, $t \in (0,1)$. One notes that $[w_t]_{A_1} = t^{-1}$.
We have
\begin{align*}
\|\calM^{1/2}\|_{L^1(w_t) \to L^1(w_t)} &\ge \frac{\|\calM^{1/2}\chi_{[-1,1]}\|_{L^1(w_t)}}{\|\chi_{[-1,1]}\|_{L^1(w_t)}} \\
&= \frac{\|\chi_{[-3,3]}\|_{L^1(w_t)}}{\|\chi_{[-1,1]}\|_{L^1(w_t)}} \\
&= \frac{4+2t}{2t} = \frac{2}{t} + 1 = 2[w_t]_{A_1} + 1.
\end{align*}
\end{proof}
\begin{remark}
The same proof yields the sharp bound $\|\calM_{\calD}^{\tau}\|_{L^{p}(w) \to L^{p}(w)} \lesssim_{\tau,p} [w]_{A_{1}}^{1/p}$.
\end{remark}

\end{document}